\newtheorem{prop}{Proposition}
\newtheorem{theorem}{Theorem}
\newtheorem{lemma}{Lemma}
\newtheorem{cjc}{Conjecture}
\title{On the Completeness of  dual foliations on Nonnegatively curved Symmetric Spaces}
\author{Renato J.M. e Silva}
\address{Instituto de Matem\'atica, Estat\'istica e Computaç\~ao Científica -- UNICAMP, Rua S\'ergio Buarque de Holanda, 651,13083-97 Campinas - SP, Brazil}
\email{renatojuniorms@gmail.com}
\author{Llohann D. Sperança}
\address{Instituto de Ci\^encia e Tecnologia -- Unifesp, Avenida Cesare Mansueto Giulio Lattes, 1201,  12247-014, S\~ao Jos\'e dos Campos, SP, Brazil}
\email{speranca@unifesp.br}
\begin{document}

\maketitle
\begin{abstract}
We prove Wilking's Conjecture about the completeness of dual leaves for the case of Riemannian foliations on nonnegatively curved symmetric spaces. Moreover, we conclude that such foliations split as a product of trivial foliations and a foliation with a single dual leaf.
\end{abstract}


\section{Introduction}

A \textit{Singular Riemannian Foliation} $\mathcal F$ on $M$ is a singular foliation, i.e., a decomposition of $M$ into  integral submanifolds of an involutive family of smooth vector fields, such that geodesics emanating perpendicularly to an element of $\mathcal F$  stays perpendicular to elements of $\mathcal F$. Such elements are called \textit{leaves}.

Given a singular Riemannian foliation $\mathcal F$, \textit{the dual leaf at $x\in M$} is the subset:
\[ L^\#_x=\{ q\in M~|~\exists c:[0,1]\to M,~c(0)=x,~c(1)=q,~c\text{ is perpenticular to leaves}\}.\]

The set of dual leaves define the \textit{dual foliation}. These concepts  and their foundations  were introduced by Wilking  \cite{wilking2007duality} and has been used in different situations in literature (see \cite{angulo2013twisted,guijarro2008dual,sperancca2018riemannian,speranca_grove}, for instance).


In particular, Wilking proves that the {dual foliation} is  a singular  foliation (see  \cite[Proposition 2.1]{wilking2007duality}), moreover, it is Riemannian if $M$ is complete with  nonnegative sectional curvature and  dual leaves are complete. This is the case in many interesting situations:

\begin{theorem}(Wilking \cite[Theorem 3]{wilking2007duality})\label{complete} Suppose that $M$ is a complete nonnegatively curved manifold with a singular Riemannian Foliation $\mathcal{F}$. Then the dual foliation has intrinsically complete leaves if, in addition, one of the following holds:
\begin{enumerate}
    \item $\mathcal{F}$ is given by the orbit decomposition of  an isometric group action;
    
    \item $\mathcal{F}$ is a non-singular foliation and $M$ is compact;
    
    \item $\mathcal{F}$ is given by the fibers of a Sharafutdinov retraction.
\end{enumerate}
\end{theorem}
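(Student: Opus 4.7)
My plan is to proceed case by case, each using the relevant extra structure, with a common setup: intrinsic completeness of $L^\#_x$ means that the path metric $d^\#$, measured along horizontal broken geodesics, makes $L^\#_x$ a complete metric space. Concretely, one must show that a sequence of horizontal broken geodesics starting at $x$ of bounded intrinsic length has a subsequence whose endpoints converge to a point still in $L^\#_x$. This requires two verifications: (a) horizontal segments extend for all time, and (b) suitable limits of horizontal broken geodesics are again horizontal broken geodesics.

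For case (1), I would exploit the $G$-invariance of the horizontal distribution $\mathcal H$. Horizontal geodesics on the principal stratum project to geodesics in the quotient Alexandrov space $M/G$, which is complete and of nonnegative curvature. When a horizontal geodesic meets a singular orbit $G\cdot q$, the slice representation of the isotropy $G_q$ furnishes enough horizontal continuations to keep the dual leaf structure intact; a singular-hitting horizontal geodesic can, if necessary, be perturbed to a close one avoiding the singular stratum. Intrinsic completeness of $L^\#_x$ should then reduce to completeness of the quotient $M/G$, with the fibers of the natural projection $L^\#_x \to \pi(L^\#_x)\subset M/G$ controlled by isotropy orbits.

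For case (2), regularity of $\mathcal F$ together with completeness of $M$ (which is compact) gives $\mathcal H$ as a smooth distribution of constant rank along which geodesics exist for all time. A sequence of horizontal broken geodesics of bounded intrinsic length is uniformly Lipschitz in $M$ and, by compactness, has a subsequence converging uniformly to a rectifiable curve; this limit is horizontal on each smooth piece by closedness of $\mathcal H$. The subtle point is controlling how break angles pass to the limit, for which I would invoke Wilking's duality lemma together with the Riemannian foliation hypothesis (holonomy along leaves is isometric on small transversals) to prevent breaks from accumulating uncontrollably and to ensure the limiting curve is again a horizontal broken geodesic with endpoint in $L^\#_x$.

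For case (3), I would invoke Perelman's soul theorem: the Sharafutdinov retraction $\pi: M \to \Sigma$ is a smooth Riemannian submersion with totally geodesic compact soul $\Sigma$. Horizontal geodesics project to geodesics in $\Sigma$, which gives strong control because $\Sigma$ is compact and complete; in particular the situation is globally regular, with $\mathcal H$ smooth and of constant rank throughout $M$. Completeness then follows by a mildly simpler version of the argument in (2). \emph{The main obstacle} common to (1) and (2) is controlling horizontal broken geodesics as they interact with singular strata or as their break points accumulate in a bounded intrinsic length; the extra hypotheses in each case are tailored precisely to tame this pathology, and case (3) is essentially easier because Perelman's theorem eliminates singular behavior altogether. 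A uniform framework encompassing all three is the real technical hurdle hidden behind the case split.
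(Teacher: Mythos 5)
This statement is Wilking's Theorem~3, which the paper \emph{cites} as background and does not prove; there is therefore no in-paper argument to compare your proposal against. The relevant proof lives in \cite{wilking2007duality}, so your sketch must be judged against Wilking's argument, not this paper's.

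On its own terms, your sketch identifies the correct difficulty (controlling the limit of a bounded-intrinsic-length sequence of horizontal broken geodesics) but leaves the genuinely hard steps unresolved. In case~(1) the claim that ``a singular-hitting horizontal geodesic can, if necessary, be perturbed to a close one avoiding the singular stratum'' is not justified and is not in general true: horizontal geodesics are a rigid one-parameter family, and perturbing the initial horizontal direction need not move the geodesic off the singular stratum, nor keep the endpoint in the same dual leaf. Likewise, ``intrinsic completeness of $L^\#_x$ should then reduce to completeness of $M/G$'' needs the projection $L^\#_x\to M/G$ to be proper, which is exactly the kind of statement that requires proof rather than assertion. In case~(2) you explicitly flag the accumulation of break points as ``the subtle point'' and then defer it to ``Wilking's duality lemma together with the Riemannian foliation hypothesis,'' which is precisely the content of the theorem rather than an available tool. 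In case~(3) you correctly invoke Perelman to get a (noncompact) regular Riemannian submersion, but then you call on ``a mildly simpler version of the argument in (2)'' even though (2) used compactness of $M$; the replacement role of the compact totally geodesic soul is named but not used. In short, the proposal correctly maps the terrain but does not supply the compactness/properness mechanism that each hypothesis is meant to furnish, and so does not constitute a proof.
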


Although Theorem \ref{complete} gives many interesting conditions for completeness of dual leaves, \cite{wilking2007duality} conjectures that  it should be the general case in nonnegative sectional curvature:

\begin{cjc}(Wilking \cite{wilking2007duality}) \label{cjc} 
Suppose  $\mathcal{F}$ is a singular Riemannian Foliation on  a complete nonnegatively curved manifold.  Then the dual foliation has complete leaves.
\end{cjc}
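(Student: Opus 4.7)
The plan is to prove intrinsic completeness of the dual leaf $L^\#_x$ directly. Since $M$ is complete and, by the definition of the dual leaf, $L^\#_x$ is closed under horizontal geodesics of $M$, any horizontal geodesic of $M$ starting in $L^\#_x$ extends for all time and remains in $L^\#_x$. The task therefore reduces to two things: showing that every intrinsic geodesic of $(L^\#_x, d^\#)$, where $d^\#$ is the induced length metric coming from the immersed-submanifold structure of Wilking's Proposition 2.1, can be locally written as a horizontal geodesic of $M$; and showing that Cauchy sequences in $(L^\#_x, d^\#)$ converge inside $L^\#_x$.

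The first step I would take is to reduce to a compact setting via the Cheeger--Gromoll soul theorem. Realize $M$ as the normal bundle of a compact totally geodesic soul $\Sigma$, and use the Sharafutdinov retraction $\pi:M\to \Sigma$, whose fibers form a singular Riemannian foliation with complete dual leaves by Theorem \ref{complete}(3). This provides a globally defined family of long horizontal curves (horizontal lifts of curves in $\Sigma$) and controls the geometry at infinity of $M$. I would then stratify $M$ by orbit type of $\mathcal F$. On the regular stratum $M_{\mathrm{reg}}$, the horizontal distribution of $\mathcal F$ is smooth and coincides with the tangent bundle of $L^\#_x \cap M_{\mathrm{reg}}$, so intrinsic geodesics are horizontal geodesics of $M$ and extend automatically. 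Near a singular leaf $L$, I would apply Molino-type linearization: in a tubular neighborhood $\mathcal F$ is equivalent to the foliation by infinitesimal holonomy orbits on the normal slice, a finite-dimensional representation on a Euclidean ball where horizontal curves are easy to describe. From this local model one aims to construct, for each $p\in L^\#_x$ and each unit vector $v\in T_p L^\#_x$, a horizontal curve in $M$ with initial direction $v$ whose length is uniformly bounded below on compact sets. An Arzel\`a--Ascoli argument applied to sequences of piecewise horizontal curves, combined with nonnegative curvature (via Rauch comparison) to bound lengths and second variations, should then yield both convergence of Cauchy sequences and extendability of intrinsic geodesics inside $L^\#_x$.

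The main obstacle, and the heart of the conjecture, is the local construction at singular points: producing a uniform lower bound on the length of a horizontal extension realizing a prescribed intrinsic tangent $v\in T_p L^\#_x$ when $p$ lies on a singular stratum of high codimension. There the horizontal distribution collapses in rank and $v$ may arise only as a limit of horizontal directions at nearby regular points; ensuring that such a limit integrates to a genuine horizontal curve, rather than a merely infinitesimal direction, is the crux of Wilking's conjecture. My proposed attack combines the Molino slice model, which reduces the problem to an analyzable slice-representation question on a Euclidean ball, with the convexity properties of distance-to-leaf functions in nonnegative curvature, which should rule out pathological accumulation of dual-leaf geodesics against singular strata. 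The symmetric space setting treated in this paper bypasses this obstruction by using a transitive ambient isometry group to propagate horizontal extensions from any point, a tool unavailable in the generality of the conjecture.
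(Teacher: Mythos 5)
What you have written is a research programme, not a proof, and you concede as much yourself: the paragraph beginning ``The main obstacle'' admits that the construction of horizontal extensions at singular strata --- which you correctly identify as the crux --- is left open. That open step \emph{is} the content of Wilking's Conjecture, so the proposal does not prove the statement. Note also that the paper itself does not prove this statement in the generality in which it is posed: it is recorded there as a conjecture, and the paper's actual result (Theorem \ref{thm:1}) settles only the case where $M$ is a nonnegatively curved \emph{symmetric space}, by a route entirely different from yours. There, Lytchak's splitting theorem is used to write $M=Z\times N$ with $Z$ a dual leaf of minimal dimension, shown to be complete by an open--closed argument (Proposition \ref{mindim}); a distance-function argument (Lemmas \ref{vetorzinho} and \ref{minimo}) then forces every dual leaf to be a slice $Z\times\{n\}$, hence complete. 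No such metric splitting is available on a general complete nonnegatively curved manifold, so the paper's method cannot be imported to close your gap.

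Beyond the admitted gap, one of your intermediate claims is incorrect. On the regular stratum the dual leaf through $x$ is \emph{not} in general an integral manifold of the horizontal distribution: $T_pL^\#_x$ is the accessibility space generated by horizontal curves, which contains $H_p$ but is typically strictly larger because the horizontal distribution is not integrable --- for a generic Riemannian submersion the dual foliation has a single leaf equal to all of $M$. Consequently intrinsic geodesics of $\bigl(L^\#_x,d^\#\bigr)$ need not be horizontal geodesics of $M$ even at regular points, and the reduction proposed in your second paragraph fails at its first step. The Sharafutdinov retraction and the Molino-type local models are reasonable tools to have on the table, but as written neither the regular-stratum claim nor the singular-stratum construction is established.
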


In this note we give an affirmative answer for  Wilking's Conjecture \cite[Conjecture]{wilking2007duality} on the completeness of dual leaves in the case of a nonnegatively curved symmetric spaces: 

\begin{theorem}\label{thm:1}
	Let $\mathcal{F}$ be a singular Riemannian foliation on $M$,  a symmetric space with nonnegative sectional curvature. Then, the dual foliation $\mathcal{F}^\#$ has complete leaves. Moreover, $\mathcal F$ decomposes as a  product  $\mathcal F_1\times \mathcal F_2$ where $\mathcal F_1$ has a single dual leaf and $\mathcal F_2$ consists of a single leaf. 
\end{theorem}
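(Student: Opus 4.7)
The plan is to produce an isometric decomposition $M=M_1\times M_2$ reflecting the structure of $\mathcal{F}$, with $M_1$ a totally geodesic symmetric subspace equal to the dual leaf of a regular point, and $M_2$ tangent to the leaves of $\mathcal{F}$. After reducing to the simply connected case, I fix a regular $p\in M$, write the Cartan decomposition $\mathfrak{g}=\mathfrak{k}\oplus\mathfrak{p}$ of the isometry Lie algebra at $p$ with $\mathfrak{p}\simeq T_pM$, and split $\mathfrak{p}=H_p\oplus V_p$ into horizontal and vertical subspaces for $\mathcal{F}$. Let $\mathfrak{g}_1\subseteq\mathfrak{g}$ be the Lie subalgebra generated by $H_p$. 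Because $H_p\subset\mathfrak{p}$, the algebra $\mathfrak{g}_1$ is stable under the Cartan involution and splits as $\mathfrak{g}_1=\mathfrak{k}_1\oplus\mathfrak{p}_1$ with $H_p\subseteq\mathfrak{p}_1$. Letting $G_1\subseteq G$ be the connected subgroup with Lie algebra $\mathfrak{g}_1$, the orbit $M_1:=G_1\cdot p=\exp(\mathfrak{p}_1)\cdot p$ is a totally geodesic symmetric subspace with $T_pM_1=\mathfrak{p}_1$.

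My first target is $L_p^{\#}=M_1$. The inclusion $L_p^{\#}\subseteq M_1$ follows by approximating horizontal curves by piecewise geodesic horizontal paths and using total geodesy of $M_1$, provided one first verifies the compatibility $H_q\subseteq T_qM_1$ for every $q\in M_1$. This compatibility is the core technical claim; I would approach it by tracking horizontal parallel transport through the transvections of the symmetric pair and checking that it respects $\mathfrak{p}_1$ modulo $\mathfrak{k}_1$. The reverse inclusion $M_1\subseteq L_p^{\#}$ follows from Theorem~\ref{complete}(1) applied to the isometric $G_1$-action, whose dual leaves are complete and, by the minimality of $\mathfrak{g}_1$, cannot strictly contain $L_p^{\#}$.

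Next, I establish the isometric splitting $M=M_1\times M_2$ where $\mathfrak{p}_2:=\mathfrak{p}_1^{\perp}\subseteq V_p$. This reduces to the bracket identities $[\mathfrak{k},\mathfrak{p}_2]\subseteq\mathfrak{p}_2$ and $[\mathfrak{p}_1,\mathfrak{p}_2]=0$. The first is inherited from the $\mathfrak{k}$-invariance of the metric together with $\mathfrak{k}$-invariance of $\mathfrak{p}_1$. For the second, one uses $\mathfrak{p}_2\subseteq V_p$: were $[X,Y]\neq 0$ for some $X\in\mathfrak{p}_1$, $Y\in\mathfrak{p}_2$, then the adjoint action of $[X,Y]\in\mathfrak{k}$ would produce horizontal directions not captured by $H_p$, contradicting the generation of $\mathfrak{g}_1$ by $H_p$. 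This bracket vanishing yields the de Rham decomposition $M=M_1\times M_2$, and the foliation splits accordingly: $\mathcal{F}_1$ on $M_1$ has $M_1$ as its single dual leaf, while $\mathcal{F}_2$ on $M_2$ consists of a single leaf (since $T_pM_2=\mathfrak{p}_2\subseteq V_p$ forces $M_2$ into the leaf through $p$). Completeness then holds automatically, as $L_p^{\#}=M_1$ is a symmetric space.

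I expect the main obstacle to be the two compatibility statements $H_q\subseteq T_qM_1$ and $[\mathfrak{p}_1,\mathfrak{p}_2]=0$. Both require extracting genuine rigidity from the SRF hypothesis combined with the symmetric-space structure, and they are where Wilking's horizontal-holonomy philosophy needs to be specialized so that the Lie-algebraic data at the basepoint control the foliation globally. Alternative routes would pass through Molino-type structure theorems or a direct analysis on each irreducible factor of the de Rham decomposition of $M$.
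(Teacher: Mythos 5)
Your construction of $M_1$ does not capture the dual leaf, and the example that kills it is simple. Take the round sphere $S^2$ with the codimension-one polar foliation by circles around an axis (two singular leaves at the poles). At a regular point $p$ the horizontal space $H_p$ is a line, so the Lie subalgebra of $\mathfrak{so}(3)$ generated by $H_p\subset\mathfrak{p}$ is abelian, $\mathfrak{p}_1=H_p$ is one-dimensional, and your $M_1=\exp(\mathfrak{p}_1)\cdot p$ is a single great circle. But $L_p^\#$ is all of $S^2$: a horizontal geodesic through $p$ hits a pole, where the leaf is a point and the entire tangent space becomes horizontal, so one escapes in every direction. Your core technical claim $H_q\subseteq T_qM_1$ fails precisely at that singular point, and $[\mathfrak{p}_1,\mathfrak{p}_2]\neq 0$ in $\mathfrak{so}(3)$ so the claimed splitting $M=M_1\times M_2$ cannot hold either. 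The deeper problem is structural: $H$ is a foliation-theoretic distribution, not a Lie-algebraic one, and its behavior across the singular stratification cannot be read off from $H_p$ at one regular point; the justification you sketch for $[\mathfrak{p}_1,\mathfrak{p}_2]=0$ (that a nonzero bracket ``would produce horizontal directions not captured by $H_p$'') presupposes an $\mathrm{ad}(\mathfrak{k})$-invariance of $H_p$ that singular Riemannian foliations simply do not have.

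The paper proceeds in the opposite direction, and this is worth internalizing. It does not try to build the factor $Z$ from infinitesimal data at one point; instead it invokes Lytchak's result (Proposition~\ref{prod}) that any dual leaf is an open subset of a totally geodesic de Rham factor $Z\times\{n\}$, shows by a connectedness argument (Proposition~\ref{mindim}) that a dual leaf of minimal dimension is actually closed and equal to such a factor, and then runs a critical-point analysis on the squared distance to $Z_n$ restricted to a nearby leaf (Lemmas~\ref{vetorzinho} and~\ref{minimo}) to show that the slices $\{z\}\times N$ are entirely vertical. This forces every dual leaf to coincide with a $Z\times\{n\}$ slice, handling the singular strata automatically since the argument is purely metric. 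The final splitting of $\mathcal F$ then follows from Wilking's flat-strip result. If you want to rescue a Lie-theoretic line of attack, the minimum you would need is to generate $\mathfrak{g}_1$ from the horizontal spaces along an entire dual leaf (including singular points) and prove a priori that the resulting distribution is autoparallel --- but at that point you have essentially reconstructed the completeness of the dual leaf, which is what you are trying to prove.
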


That is,  there is a metric  decomposition  $M=Z\times N$, together with a singular Riemannian foliation $\mathcal F_1$ on $Z$, satisfying $L^\#=Z$, such that 
\[\mathcal F=\{L\times N~|~L\in \mathcal F_1\}. \]

The result is new even for foliations on the Euclidean space and has an important  application to polar foliations. Indeed,  one readily recovers the following result:

\begin{theorem}\label{thm:polar}
Let $\mathcal F$ be a polar foliation on $M$ and $\Sigma\looparrowright M$ a polar section. If the action of the Weyl group on $\Sigma$ splits, then $\mathcal F$ splits.
\end{theorem}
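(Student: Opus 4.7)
My plan is to reduce the statement to an application of Theorem~\ref{thm:1}, exploiting the tight correspondence between polar foliations on a symmetric space and isometric actions of the Weyl group $W$ on the section $\Sigma$. Recall that for a polar foliation, $\Sigma$ is totally geodesic in $M$ and the restriction of $\mathcal{F}$ to $\Sigma$ coincides with the $W$-orbit foliation; in particular $\Sigma$ is itself a nonnegatively curved symmetric space, so Theorem~\ref{thm:1} is available both on $M$ and on $\Sigma$.

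Under the hypothesis, write $\Sigma = \Sigma_1 \times \Sigma_2$ metrically with $W = W_1 \times W_2$ acting factorwise. Using the standard reconstruction principle for polar foliations, I would first produce a pair of transverse sub-foliations $\mathcal{F}_1, \mathcal{F}_2 \subseteq \mathcal{F}$ on $M$, where $\mathcal{F}_i$ is the (unique) polar foliation of $M$ with section $\Sigma_i$ and Weyl group $W_i$; the leaves of $\mathcal{F}$ then factor into products of $\mathcal{F}_1$- and $\mathcal{F}_2$-leaves along $\Sigma$. Applying Theorem~\ref{thm:1} to each $\mathcal{F}_i$ produces metric product decompositions of $M$ compatible with $\mathcal{F}_i$, and comparing these with the given product on $\Sigma$ should force a single decomposition $M = M_1 \times M_2$ with $\Sigma_i \subset M_i$ and $\mathcal{F} = \mathcal{F}_1 \times \mathcal{F}_2$.

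The main obstacle I anticipate is establishing the transversality and global integrability of the two sub-foliations: one must show that the distributions extending $T\Sigma_1$ and $T\Sigma_2$ across $M$ via $\mathcal{F}$-holonomy are parallel and complementary on all of $M$. I expect this to follow from the rigidity of the splittings provided by Theorem~\ref{thm:1}, which are canonical with respect to the dual-leaf structure, together with the observation that dual leaves restrict to $W$-invariant subsets of $\Sigma$; hence any $W$-splitting on the section is already visible at the level of the dual foliation on $M$. Once two complementary parallel distributions are in place, the de Rham theorem delivers the product decomposition of $M$, and the matching splitting of $\mathcal{F}$ follows by restricting to $\Sigma$ and invoking the uniqueness part of the polar reconstruction.
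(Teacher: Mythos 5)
Your overall strategy---using the $W$-splitting to produce two auxiliary polar foliations on $M$ and then invoking Theorem~\ref{thm:1} to obtain a product decomposition---matches the paper's approach, but the two key technical steps are left unresolved, and in one place the structure you describe is the wrong way around.

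First, the auxiliary foliations. You describe ``sub-foliations $\mathcal F_1,\mathcal F_2\subseteq\mathcal F$'' whose leaves factor the leaves of $\mathcal F$ as products. Since $\mathcal F_i$ is supposed to have section $\Sigma_i$ with $\dim\Sigma_i<\dim\Sigma$, its leaves have \emph{smaller} codimension than those of $\mathcal F$, i.e.\ the $\mathcal F_i$ are \emph{coarsenings} of $\mathcal F$ (each $\mathcal F_i$-leaf is a union of $\mathcal F$-leaves), and one recovers an $\mathcal F$-leaf as the \emph{intersection} of an $\mathcal F_1$-leaf and an $\mathcal F_2$-leaf, not a product. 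More importantly, the existence of these foliations is not ``standard'': the paper relies on Lytchak's construction (\cite[Section~2.5]{lytchak2014polar}), which says that if the section $\Sigma$ carries a $W$-invariant polar foliation $\mathcal G$, then composing the metric quotients $M\to\Sigma/W\to(\Sigma/\mathcal G)/W$ yields a polar foliation on $M$. This is the lemma that turns the $W$-splitting on $\Sigma$ into the foliations $\mathcal F_1',\mathcal F_2'$ on $M$. Without naming a mechanism here, your plan has a gap precisely where the hypothesis is used.

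Second, the compatibility of the two decompositions. Applying Theorem~\ref{thm:1} to $\mathcal F_1'$ gives $M=M_1\times M_2$ with the leaves of $\mathcal F_1'$ of the form $L\times M_2$. You then need to know that $\mathcal F_2'$ is a product of the one-leaf foliation on $M_1$ with a foliation on $M_2$, and ``rigidity of the splittings'' plus ``dual leaves restrict to $W$-invariant subsets'' does not give this. The paper's argument is concrete: any $\mathcal F_1'$-horizontal curve projects under $p:M\to\Delta_1\times\Delta_2$ into a $\Delta_1$-factor, hence to a single point of $\Delta_2$; therefore every dual leaf of $\mathcal F_1'$ lies inside a single $\mathcal F_2'$-leaf, which forces the $M_1$-factor to be $\mathcal F_2'$-vertical. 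Only then is the parallel-transport (``flat rectangle'') argument from the last paragraph of the proof of Theorem~\ref{thm:1} invoked to split $\mathcal F_2'$ off of the $M_2$-factor. I'd encourage you to make these two steps explicit, since they are exactly where the proof has real content.
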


Theorem \ref{thm:polar} recovers the results in Ewert  \cite[Theorem 3]{ewert1998splitting}, Lytchak  \cite[Lemma 4.1]{lytchak2014polar} and Liu--Radeschi \cite[Proposition 3.4]{liu2020polar}. The proof is a direct application of Theorem \ref{thm:1} together with the arguments in \cite[sections 2.5 and 4.2]{lytchak2014polar}.  We refer to section \ref{sec:polar} for details.

To prove Theorem \ref{thm:1}, we use  Lytchak \cite[Proposition 3.1]{lytchak2014polar} to decompose $M$ as a metric product $M=Z\times N$, where $Z\times\{n\}$ is a minimal dual leaf,   then we study the critical points of the distance  function between a slice $\{z\}\times N$ and a fixed leaf.

In section \ref{sec:prelim} we  state some useful results and in section \ref{sec:proof} we prove Theorem \ref{thm:1}. Section \ref{sec:polar}  relates Theorem \ref{thm:1} to polar foliations.

\section{Preliminaries} \label{sec:prelim}

Our main idea to proof Wilking's conjecture relies  on decomposing $M$ as a product manifold, where one of the factors is a dual leaf. To this aim, we recall the following result:

\begin{theorem}(Lytchak, \cite[Proposition 3.1]{lytchak2014polar})\label{prod} Let $M$ be a  symmetric space with nonnegative sectional curvature. If $L^\#$ is a dual leaf, then $M$ factors as $M=Z\times N$, where $L^\#$ is an open subset of $Z\times \{n\}$, for some $n\in N$.
\end{theorem}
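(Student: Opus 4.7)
My plan is to show that at a regular point $p \in L^\#$, the tangent space $V := T_p L^\#$ extends to a parallel distribution $\mathcal D$ on all of $M$, and then invoke the de Rham decomposition theorem to conclude the metric splitting $M = Z \times N$ with $TZ = \mathcal D$, and with $L^\#$ sitting as an open subset of a single slice $Z \times \{n\}$.

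First, I would fix a regular point $p \in L^\#$; such points are dense in the dual leaf by Wilking's structure results. By \cite{wilking2007duality}, the tangent distribution $TL^\#$ on the regular stratum of $\mathcal F$ contains the horizontal distribution $\nu L$ and is closed under horizontal holonomy. A second variation/horizontal Jacobi field argument in nonnegative curvature (the same rigidity behind Theorem \ref{complete}) then shows that $TL^\#$ is preserved by parallel transport along \emph{horizontal} geodesics. The key next step is to distill this into an algebraic characterization of $V \subset T_p M$ purely in terms of the curvature tensor $R_p$, for instance, by arguing that $V$ is the smallest subspace containing $\nu_p L$ that is invariant under all operators $R_p(X,Y)$ with $X, Y \in T_p M$.

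Once $V$ is characterized purely in curvature terms, the defining property of a symmetric space, $\nabla R \equiv 0$, forces the curvature-generated subspace to be preserved by arbitrary parallel transport. This upgrades $V$ to a globally parallel distribution $\mathcal D$ on $M$ extending $TL^\#$. Then de Rham's decomposition theorem (applied on the universal cover and pushed down using the standard structure theory of symmetric spaces of nonnegative type, namely that they split as a Euclidean factor times a compact-type factor) yields a metric product $M = Z \times N$ with $TZ = \mathcal D$ and $TN = \mathcal D^\perp$. Since $L^\#$ is an immersed submanifold everywhere tangent to $\mathcal D$, it sits inside a single slice $Z \times \{n\}$, and since $\dim L^\# = \dim Z$ it is open there.

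The step I expect to be the main obstacle is the algebraic characterization of $V$ in terms of $R_p$ alone, that is, bridging Wilking's \emph{horizontal} holonomy invariance into full parallelism, which must also handle transport along vertical directions tangent to $L$. A second subtlety is the singular stratification: $TL^\#$ is only defined \emph{a priori} on the regular stratum and can degenerate along more singular leaves, so one must verify that the global parallel extension $\mathcal D$ interacts coherently with the stratification of $\mathcal F$ and recovers $TL^\#$ at every regular point of $L^\#$.
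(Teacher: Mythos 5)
Note first that the paper does not prove Theorem~\ref{prod} at all: it is quoted verbatim from Lytchak \cite[Proposition 3.1]{lytchak2014polar} and used as an external input. So there is no ``paper's own proof'' to compare against; I can only assess your sketch on its own merits.

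Your high-level plan (extend $T_pL^\#$ to a parallel distribution, apply de Rham) is the natural one, but the sketch has a genuine gap precisely at the step you yourself flag. Wilking's flat-strip/Jacobi-field rigidity in nonnegative curvature gives you parallelism of $TL^\#$ along \emph{horizontal} geodesics only. To invoke $\nabla R \equiv 0$ and conclude full parallelism, you need $V = T_pL^\#$ to be invariant under \emph{all} curvature operators $R_p(X,Y)$, $X,Y \in T_pM$, since in a symmetric space those generate the entire holonomy Lie algebra at $p$. Horizontal parallelism only controls $R_p(X,Y)V$ when at least one of the slots is horizontal; the curvature operators with both entries vertical (i.e. tangent to the leaf) are not reached by that argument, and nothing in your sketch handles them. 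This is not a cosmetic issue: if $V$ fails to be invariant under the full holonomy algebra, the ``smallest curvature-invariant subspace containing $\nu_pL$'' could be strictly larger than $V$, and then $L^\#$ would lie in a slice $Z\times\{n\}$ of strictly higher dimension, so it would not be open there, and the conclusion of the theorem would fail.

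A secondary point: you propose to characterize $V$ as the minimal $R_p$-invariant subspace containing $\nu_pL$, but even granting that this subspace $W$ is parallel, the identification $W = V$ requires both $V \subseteq W$ (easy, since horizontal curves are tangent to $W$ and $W$ is integrable with totally geodesic slices) and $W \subseteq V$, which is exactly the unproved curvature invariance of $V$ above. So the two gaps you list are really the same gap, and it is the whole content of the proposition. As written, the sketch is a plausible plan but not a proof; you would need to supply the argument that $T_pL^\#$ is closed under $R_p(\cdot,\cdot)$ in both slots, for instance by exploiting Wilking's totally geodesic flats spanned by horizontal geodesics and parallel vertical fields (cf.\ \cite[Proposition 6.1]{wilking2007duality}) to control the vertical-vertical curvature terms, or by a more direct Lie-triple-system argument in the symmetric-space presentation $M = G/K$.
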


A direct application  ensures completeness of dual leaves with minimal dimension.

\begin{prop}\label{mindim}
Let $M$ be a  symmetric space with nonnegative sectional curvature. If  $L^\#$ is a dual leaf with  minimal dimension, then  $L^\#$ is complete. Moreover,
\[ M=L^\# \times N.\]
\begin{proof}
Suppose that $L^\#$ is not complete. Then,  Theorem \ref{prod} gives us a totally geodesic submanifold with the same dimension as $L^\#$ such that $L^\# \subsetneq Z$.

By hypothesis,  the topological boundary of $L^\#$ on $Z$ is not empty, on the other hand  $bd(L^\#)=\bigcup F^\# \subset Z$ is a disjoint union of dual leaves (see Wilking \cite[page 1312]{wilking2007duality}). 
Moreover, since $F^\# \subseteq Z$ and $\dim L^\#$ has minimal dimension among dual leaves, 
\begin{equation*}
\dim L^\#\leq     \dim F^\# \leq \dim Z=\dim L^\#.
\end{equation*}
\noindent  Therefore, applying Theorem \ref{prod} again, each $F^\#$ is an open subset of $Z$.

We conclude that the closure of $L^\#$, $L^\#\cup bd(L^\#)$, is covered by non-trivial disjoint open subsets. On the other hand, $L^\#\cup bd(L^\#)$ is a closed  connected subset of $Z$, since $L^\#$ is connected, a contradiction.
\end{proof}
\end{prop}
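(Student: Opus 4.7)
The plan is to apply Theorem \ref{prod} to the dual leaf $L^\#$ and then argue by contradiction that the open inclusion $L^\# \subseteq Z\times\{n\}$ must actually be an equality, where $Z$ is the factor supplied by Lytchak's decomposition. If we succeed, the moreover clause $M=L^\#\times N$ will be immediate, and intrinsic completeness follows because $Z$, as a factor of a symmetric space, is itself a complete totally geodesic submanifold.

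First I would fix $L^\#$ of minimal dimension among dual leaves and invoke Theorem \ref{prod} to write $M=Z\times N$ with $L^\#$ an open subset of $Z\times\{n\}$; in particular $\dim L^\#=\dim Z$. Suppose for contradiction that $L^\#\subsetneq Z\times\{n\}$. Since $L^\#$ is an open proper subset of the connected totally geodesic submanifold $Z\times\{n\}$, its topological boundary there is nonempty. Here I would recall the fact from Wilking \cite[page 1312]{wilking2007duality} that the boundary $bd(L^\#)$ is itself a disjoint union of dual leaves $F^\#$; because $Z\times\{n\}$ is closed and contains $L^\#$, every such $F^\#$ lies entirely in $Z\times\{n\}$.

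Next I would use the minimality hypothesis to squeeze each boundary leaf: $\dim L^\#\leq \dim F^\#\leq \dim(Z\times\{n\})=\dim L^\#$, so $\dim F^\#=\dim Z$. Applying Theorem \ref{prod} to $F^\#$ yields, in particular, that $F^\#$ is an open subset of an equidimensional totally geodesic submanifold sitting inside $Z\times\{n\}$, and since dimensions agree, $F^\#$ must actually be open in $Z\times\{n\}$. Consequently the closure $\overline{L^\#}=L^\#\cup bd(L^\#)$ is covered by mutually disjoint nonempty open subsets of $Z\times\{n\}$, contradicting the connectedness of $\overline{L^\#}$ (which follows from connectedness of $L^\#$).

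The step I expect to be the most delicate is justifying that each boundary dual leaf $F^\#$ really is open in the ambient factor $Z\times\{n\}$ and not merely open in some other totally geodesic submanifold provided by a fresh application of Theorem \ref{prod}; the dimension squeeze handles this once one observes that $F^\#\subseteq Z\times\{n\}$ is an embedded submanifold of the same dimension as $Z\times\{n\}$. Everything else is bookkeeping: the connectedness of dual leaves (so their closures are connected), and the structural fact from Wilking that boundaries of dual leaves decompose into dual leaves.
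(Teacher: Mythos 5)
Your proposal follows essentially the same route as the paper: invoke Lytchak's decomposition (Theorem \ref{prod}), assume for contradiction that $L^\#$ is a proper open subset of $Z\times\{n\}$, use Wilking's fact that the topological boundary is a disjoint union of dual leaves $F^\#$, squeeze $\dim F^\#$ between $\dim L^\#$ and $\dim Z$ using minimality, deduce that each $F^\#$ is open in $Z\times\{n\}$, and contradict connectedness of $\overline{L^\#}$. The one place you are slightly more explicit than the paper is in justifying that a fresh application of Theorem \ref{prod} to $F^\#$ (which a priori produces some other factor $Z'$) really does make $F^\#$ open in the original slice $Z\times\{n\}$; you correctly note that this follows from the dimension count together with invariance of domain, a point the paper leaves implicit when it simply asserts ``each $F^\#$ is an open subset of $Z$.'' Your argument is correct and is, if anything, a slightly tidier write-up of the same proof.
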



\section{Proof of Theorem \ref{thm:1}}\label{sec:proof}
We begin the proof by using Proposition \ref{mindim} to  construct very particular vertical vectors outside a minimal dual leaf. We denote by $V$ and $H$ the vertical and horizontal spaces,  that is, the space tangent to the leaves and the space orthogonal to $V$, respectively.

Let $L^\#=Z$ and $M=Z\times N$ be a fixed closed dual leaf and its respective metric decomposition given by Proposition \ref{mindim}. Fix $(z,n)\in Z\times N$  such that  $L^\#=Z\times \{n\}$. Denote $Z\times \{n\}=Z_n$ and $\{z\}\times N=N_z$. The main idea is to use Lemma \ref{vetorzinho} to show that $N_z$ is included in a single leaf, for every $z$.

Let  $U$ be a tubular  neighborhood of $Z_n$ where the square of the distance function $f:U\to \mathbb R$,
\[ f(z',n')=d_M((z',n'),Z_n)^2=d_N(n',n)^2,\]
is smooth.  Note that the neighborhood $U$ can be chosen as $Z\times B_n(r)$, where $B_n(r)$ is a convex radius $r$ open ball around $n\in N$ and $r$ does not depend on $n$, since the injectivity radius on symmetric spaces does not depend on the point. 

\begin{lemma}\label{vetorzinho} For every $(z',n')\in U-Z_n$, there exists  $v \in T_{(z',n')}N_{z'}\cap V_{(z',n')}$ such that 
\begin{equation*}\label{vec}
    \langle v, \nabla f \rangle <0. 
\end{equation*} 
\begin{proof} We claim that 
\begin{equation*} \nabla f\notin \tilde{H}_{(z',n')}=pr_{TN} H_{(z',n')}, \end{equation*} 
where the right-hand-side is the orthogonal projection of $H_{(z',n')}$ in $T_{(z',n')}N$. 
Recall that $\nabla f(z',n')$ is the vector in $N_{z'}$ defined as the velocity of a minimizing  geodesic connecting $(z',n')$ to $(z',n)$. Observing that geodesics in $M=Z\times N$ are product  geodesics, we conclude that no horizontal vector can be of the form $X+\nabla f$, $X$ tangent to $Z_{n'}$, otherwise there would be a horizontal geodesic, defined by $X+\nabla f$,  connecting the dual leaf passing through $(z',n')\notin Z_n$ to the dual leaf $Z_n$, a contradiction.

We conclude that  $\nabla f\notin \tilde H$, thus there exists $v\in \tilde H^\perp\cap TN_{z'}=H^\perp\cap TN_{z'}=V\cap TN_{z'}\neq 0$ such that $ \langle v, \nabla f \rangle <0.$
\end{proof}
\end{lemma}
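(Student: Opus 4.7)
The plan is to take $v$ to be the negative of the orthogonal projection of $\nabla f$ onto $V \cap T N_{z'}$, so that $\langle v, \nabla f\rangle = -|v|^2$. The entire content of the lemma is therefore the nonvanishing of that projection, which I would extract from the product structure $M = Z \times N$ together with the defining property of dual leaves.

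First I would record two preliminary facts. Since $f(z',n') = d_N(n',n)^2$, the gradient $\nabla f$ lies in $T N_{z'}$ and is, up to a positive scalar, the initial velocity in $N$ of the minimizing geodesic from $n'$ to $n$; in particular, the $M$-geodesic starting at $(z',n')$ with velocity $\nabla f$ stays in $\{z'\}\times N$ and hits $Z_n$ in finite time. Secondly, writing $\tilde H = pr_{TN}(H)$ and using $V = H^\perp$ together with the fact that the projection onto $TN$ is orthogonal, I would verify the identity
\[
\tilde H^{\perp} \cap T N_{z'} = V \cap T N_{z'}.
\]
Hence showing that $\nabla f$ has a nonzero $V\cap TN_{z'}$-component is equivalent to the single assertion $\nabla f \notin \tilde H$.

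The main obstacle, and essentially the only place the hypotheses enter, is this nonmembership. I would argue by contradiction: if $\nabla f \in \tilde H$, there exists $X \in T Z_{n'}$ with $X + \nabla f \in H_{(z',n')}$. Because the metric splits as a product, the geodesic generated by $X + \nabla f$ decomposes as the product of the $Z$-geodesic from $z'$ with velocity $X$ and the $N$-geodesic from $n'$ with velocity $\nabla f$. By the first observation, the $N$-factor reaches $n$ in finite time, so this horizontal geodesic terminates at a point of $Z_n$. Since horizontal geodesics preserve dual leaves, this would force $(z',n') \in Z_n$, contradicting the standing assumption $(z',n') \in U \setminus Z_n$.
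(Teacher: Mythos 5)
Your argument is correct and follows essentially the same route as the paper: reduce the lemma to showing $\nabla f \notin \tilde H$, then derive a contradiction by noting that a horizontal vector of the form $X + \nabla f$ would generate (via the product geodesic structure of $Z \times N$) a horizontal geodesic from $(z',n')$ to a point of $Z_n$, placing $(z',n')$ in the dual leaf $Z_n$. Your explicit choice $v = -\mathrm{pr}_{V\cap TN_{z'}}\nabla f$ and the unwinding of $\tilde H^\perp \cap TN_{z'} = V \cap TN_{z'}$ make the final step slightly more transparent than in the paper, but the substance is identical.
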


Let $(z',n)\in Z\times N$ be a point whose  leaf  we denote by  $L_{(z',n)}$. Define $g_{z'}:L_{(z',n)}\cap U\to \mathbb{R}$ by 
$$g_{z'}(x)=d_M(x,N_z)^2=d_M(x,N_z\cap U)^2.$$

\begin{lemma}\label{minimo} For every $(z',n) \in Z_n-L_{(z,n)}$, 
\[d_M(L_{(z',n)}\cap U,N_z\cap U)^2=\min g_{z'}>0. \]

\begin{proof} Let $m$ be an arbitrary value for $g_{z'}$. We claim that   $g_{z'}^{-1}(m) \cap Z_n \neq \emptyset$. Once proved the claim,  we have
\begin{equation*}
    m=g_{z'}(z'',n)=d_Z(z'',z)^2>0,
\end{equation*}
for any given $(z'',n)\in g_{z'}^{-1}(m) \cap Z_n $. The inequality follows since $z''\neq z$ by hypothesis, which completes the proof.

It is sufficient to prove the claim for regular values, since $g^{-1}_{z'}([m,m'])$ is closed for every $m'>m$ and Sard's Theorem guarantees that the subset of  regular values is dense in this interval.

Let  $\epsilon^2>0$ be a regular value and  denote
\[S_z(\epsilon)=\{z''\in Z~|~d_{Z}(z,z'')=\epsilon\}.\] 
Observe that 
\begin{equation*} g_{z'}^{-1}(\epsilon^2)=L_{(z',n)}\cap U\cap (S_z(\epsilon) \times N). 
\end{equation*}
Its tangent space satisfy 
\begin{equation*}
    Tg_{z'}^{-1}(\epsilon^2)=TL_{(z',n)}\cap (TS_z(\epsilon)\times TN)\supseteq TL_{(z',n)}\cap TS(\epsilon) + TL_{(z',n)}\cap TN.
\end{equation*}
Because of the last factor, it follows that each point of  
$g_{z'}^{-1}(\epsilon^2)-Z_n$ has a vector $v$ as in Lemma \ref{vetorzinho}. Therefore no critical point of  $f|_{g^{-1}(\epsilon^2)}$ can happen outside $g_{z'}^{-1}(\epsilon^2)\cap Z_n$. However, supposing ${g^{-1}(\epsilon^2)}\neq \emptyset$, $f|_{g^{-1}(\epsilon^2)}$ must have a minimum. Since  this minimum must happen in $Z_n$, we conclude that ${g^{-1}(\epsilon^2)}\cap Z_n\neq \emptyset$ whenever ${g^{-1}(\epsilon^2)}\neq \emptyset$, completing the proof.
\end{proof}
\end{lemma}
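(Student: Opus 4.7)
The plan is to use Lemma~\ref{vetorzinho} to rule out critical points of $f$ on level sets of $g_{z'}$ inside the leaf, and then extract a minimizer on $Z_n$. On the product tube $U=Z\times B_n(r)$ one has $g_{z'}(z'',n')=d_Z(z,z'')^2$, so $\nabla g_{z'}$ is tangent to the $Z$-factor. Hence any vector $v\in TN_{z''}$ automatically satisfies $\langle v,\nabla g_{z'}\rangle=0$, i.e., it is tangent to the level sets of $g_{z'}$. Applied to the vertical vector produced by Lemma~\ref{vetorzinho} at a point $(z'',n')\in U-Z_n$, this $v$ is vertical, tangent to $g_{z'}^{-1}(m)$, and satisfies $\langle v,\nabla f\rangle<0$. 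Consequently, \emph{$f$ restricted to $L_{(z',n)}\cap g_{z'}^{-1}(m)$ has no critical point outside $Z_n$}.

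Next I would fix a regular value $\epsilon^2$ of $g_{z'}$, so that $L_{(z',n)}\cap g_{z'}^{-1}(\epsilon^2)$ is a smooth submanifold of the leaf. Since $f$ is bounded by $r^2$ on $U$, after a mild compactness adjustment it attains a minimum on this submanifold. By the previous paragraph the minimum must lie in $Z_n$, so $L_{(z',n)}\cap g_{z'}^{-1}(\epsilon^2)\cap Z_n\neq\emptyset$. For a point $(z'',n)$ in this intersection one has $\epsilon^2=g_{z'}(z'',n)=d_Z(z,z'')^2$; the hypothesis $(z',n)\notin L_{(z,n)}$ prevents $z''=z$ (otherwise $(z,n)\in L_{(z',n)}$, contradicting the leaf partition), so $\epsilon^2>0$. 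Density of regular values (Sard) together with closedness of sublevel sets propagates the lower bound to the whole image of $g_{z'}$, yielding $\min g_{z'}>0$.

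The main obstacle I foresee is justifying that the infimum of $f$ on $L_{(z',n)}\cap g_{z'}^{-1}(\epsilon^2)$ is actually attained, since the level set need not be compact in the open tube $U$. I would handle this either by shrinking $U$ to a slightly smaller closed tube $Z\times \overline B_n(r-\delta)$ and exploiting the bound $f\le r^2$ to preclude escape through $\partial U$, or by running the $-\nabla f$ flow inside the level set and extracting a limit point in the closure (whose distance to $Z_n$ realises the infimum). Once the minimum is secured, the final identification $d_M(L_{(z',n)}\cap U,\, N_z\cap U)^2=\min g_{z'}$ follows directly from the definition of $g_{z'}$ as the squared distance to $N_z$ together with the choice of $U$, which guarantees $d_M(x,N_z)=d_M(x,N_z\cap U)$ on $U$.
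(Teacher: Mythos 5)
Your proposal reproduces the paper's argument essentially step by step: use Lemma~\ref{vetorzinho} to kill critical points of $f$ restricted to a regular level set of $g_{z'}$ away from $Z_n$, conclude the minimum must land in $Z_n$, extract $d_Z(z'',z)>0$ there, and propagate from regular values to all values by Sard and closedness of sublevel sets. Two small comparisons are worth noting. First, where the paper establishes that the vertical vector $v$ of Lemma~\ref{vetorzinho} is tangent to the level set via the inclusion $Tg_{z'}^{-1}(\epsilon^2)\supseteq TL\cap TS(\epsilon)+TL\cap TN$, you instead observe directly that $\nabla g_{z'}$ is $Z$-tangent on the product tube while $v\in TN$, so $\langle v,\nabla g_{z'}\rangle=0$; these are the same fact, and your phrasing is, if anything, more transparent. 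Second, you explicitly flag the attainment of the minimum of $f$ on the level set as a concern, whereas the paper simply asserts ``$f|_{g^{-1}(\epsilon^2)}$ must have a minimum''; your worry is legitimate, since $g_{z'}^{-1}(\epsilon^2)\subset L_{(z',n)}\cap U$ lives in an open tube and in a leaf that need not be closed in $M$, so a minimizing sequence could a priori limit onto a point of $\overline{L_{(z',n)}}\setminus L_{(z',n)}$. Your proposed remedies (shrinking the tube, or running the $-\nabla f$ flow and taking a limit) do not by themselves cure this, because the obstruction is non-closedness of the leaf rather than escape through $\partial U$; but the same issue is present, unaddressed, in the paper's proof, so this is not a defect specific to your attempt. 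In short, this is the same proof, stated with slightly more care at the tangency step and with an honest, though unresolved, note about compactness.
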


\begin{proof}[Proof of Theorem \ref{thm:1}]
Since $z,z'$ in Lemma \ref{minimo} are arbitrary, we conclude that  $N_z\cap U\subseteq L_{(z,n)}$ for every $z$ (equivalently, $N_z\cap U\cap L_{(z',n)}=\emptyset$ whenever $(z',n)\notin L_{(z,n)})$, thus concluding that $TN_z|_U$ is vertical. In particular, for  every $(z'',n'')\in U$, $H_{(z'',n'')}\subseteq TZ_{n''}$, concluding that $L^\#_{(z'',n'')}= Z_{n''}$ by the minimality of the dimension of $Z$ and Proposition \ref{mindim}. This argument shows that a dual leaf $L^\#_{(z',n')}$ coincides with $Z_{n'}$ whenever $n'$ is in the tubular neighborhood $U$ of a dual leaf $L^\#_{(z,n)}$ that satisfies   $L^\#_{(z,n)}=Z_n$. We conclude the proof by recalling that  $U$ can be chosen as $Z\times B_n(r)$, where $r$ does not depend on $n$; and that every point $n'\in N$ can be connected to a point  $n_0$, satisfying $L^\#_{(z,n_0)}$ has minimal dimension, through a sequence $n_0,...,n_k$, such that $n_k=n'$ and $B_{n_i}(r)\cap B_{n_{i+1}}(r)\neq \emptyset$. 

Now, given a point $z\in Z$, one may ask weather the horizontal space $H_{(z,n)}$, considered as a subspace of $TZ$, vary with $n$ or not. To conclude that it does not vary, consider the parametrized plane  $\sigma(s,t)=(\gamma_1^t(s),\gamma_2(t))$ defined by (a family of) geodesics $\gamma_1^t\subset Z$, $\gamma_2\subset N$, such that $(\gamma_1^t)'(s)\in H$ for every $t,s$. Wilking \cite[Proposition 6.1]{wilking2007duality} states that $\sigma$ is a totally geodesic flat, in particular
\[\frac{\partial\sigma}{\partial s}(s,t)=(\gamma_1^t)'(s)\]
is parallel along $t\mapsto\sigma(s,t)$. Thus, by unicity of parallel transport, $(\gamma_1^t)'(s)=(\gamma_1^0)'(s)$  for every $s,t$, concluding the statement.
\end{proof}

\section{An application to Polar Foliations}\label{sec:polar}

A singular Riemannian foliation is called \textit{polar} if it admits a totally geodesic horizontal section, i.e., an immersed connected totally geodesic submanifold $\Sigma\looparrowright M$ such that  $\Sigma$ intersects every leaf perpendicularly. 

One sees that the intersection of $\Sigma$ with the singular strata happens in a set of totally geodesic hypersurfaces of $\Sigma$ and it  defines a group of reflections $W$, called the \textit{Weyl group}. The metric quotient $\Sigma/W$ is isometric to the leaf space $M/\mathcal F$.

Now suppose that the action of the Weyl group splits, i.e., $\Sigma=\Sigma_1\times \Sigma_2$, as a product of groups, and $W=W_1\times W_2$, as a metric product, such that $W_i$ only acts on the $i$-th coordinate. One may ask whether the foliation itself splits. Here we restate and recall the arguments needed to prove Theorem \ref{thm:polar}.

\begingroup
\def\thetheorem{\ref{thm:polar}}
\begin{theorem}
Let $\mathcal F$ be a polar foliation on $M$ and $\Sigma=\Sigma_1\times\Sigma_2\looparrowright M$ a polar section. Suppose that  the action of the Weyl group splits. Then there is a decomposition  $M=M_1\times M_2$, together with polar foliations $\mathcal F_i$ on $M_i$, such that each leaf of $\mathcal F$ is the product of a leaf in $\mathcal F_1$ and a leaf in $\mathcal F_2$. 
\end{theorem}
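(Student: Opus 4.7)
The plan is to extract from the Weyl group splitting two coarser singular Riemannian foliations on $M$, one associated with each factor, and feed them into Theorem \ref{thm:1} to obtain a metric splitting of $M$ along which $\mathcal F$ decomposes. Throughout, we implicitly assume, as required by Theorem \ref{thm:1}, that $M$ is a nonnegatively curved symmetric space.

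\emph{Construction of coarser foliations.} Since $\mathcal F$ is polar, the leaf space satisfies $M/\mathcal F\cong\Sigma/W=(\Sigma_1/W_1)\times(\Sigma_2/W_2)$ as metric spaces. For $i\in\{1,2\}$, define $\hat{\mathcal F}_i$ as the SRF on $M$ whose leaves are the preimages, under the quotient map $\pi\colon M\to M/\mathcal F$, of the fibers of the projection $(\Sigma_1/W_1)\times(\Sigma_2/W_2)\to\Sigma_j/W_j$ with $j\ne i$. Using the local structure results of \cite[Sections 2.5 and 4.2]{lytchak2014polar}, one checks that $\hat{\mathcal F}_i$ is a polar foliation with totally geodesic section $\Sigma_i$ and Weyl group $W_i$. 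Moreover, horizontal curves for $\hat{\mathcal F}_i$ can locally be realized as geodesics of $\Sigma_i$, and such geodesics sweep out entire $W_i$-orbits; hence each $\hat{\mathcal F}_i$-leaf already equals its own dual leaf, i.e.\ $\hat{\mathcal F}_i^{\#}=\hat{\mathcal F}_i$.

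\emph{Applying Theorem \ref{thm:1}.} Apply Theorem \ref{thm:1} to $\hat{\mathcal F}_1$. Because its leaves are dual leaves, the resulting splitting takes the form $M=M_1\times M_2$ with $\hat{\mathcal F}_1=\{M_1\times\{p\}\mid p\in M_2\}$. Applying Theorem \ref{thm:1} to $\hat{\mathcal F}_2$, and matching the two factorizations using the uniqueness of the de~Rham decomposition (the tangent spaces to $\hat{\mathcal F}_1$- and $\hat{\mathcal F}_2$-leaves being orthogonal, as they are inherited from the orthogonal factors $\Sigma_1,\Sigma_2$), yields $\hat{\mathcal F}_2=\{\{q\}\times M_2\mid q\in M_1\}$. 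Define $\mathcal F_i$ on $M_i$ as the SRF induced by restricting $\mathcal F$ to any slice $M_i\times\{p\}$ or $\{q\}\times M_2$; by construction each $\mathcal F_i$ is polar with section $\Sigma_i$ and Weyl group $W_i$. Since $\mathcal F$ corresponds to the full product quotient while each $\hat{\mathcal F}_i$ sees only one factor, the $\mathcal F$-leaves are the connected components of intersections $\hat{\mathcal F}_1\text{-leaf}\cap\hat{\mathcal F}_2\text{-leaf}$, so each $\mathcal F$-leaf factors as a product of an $\mathcal F_1$-leaf and an $\mathcal F_2$-leaf, as required.

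The main obstacle is the first step: producing $\hat{\mathcal F}_i$ as a genuine polar singular Riemannian foliation and verifying $\hat{\mathcal F}_i^{\#}=\hat{\mathcal F}_i$. This is precisely the ``Lytchak-style'' input taken from \cite[Sections 2.5 and 4.2]{lytchak2014polar}; the subtlety is that coarsening a polar foliation need not remain polar in general, and one must use the product structure of $\Sigma$ and the reflectional nature of $W$ to guarantee that $\hat{\mathcal F}_i$ is well-defined and smooth across singular strata. Once this technical step is in place, Theorem \ref{thm:1} provides the compatible metric product splitting of $M$ essentially for free, and the product decomposition of $\mathcal F$ then follows formally.
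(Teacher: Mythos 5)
Your overall plan---coarsen $\mathcal F$ to two polar foliations $\hat{\mathcal F}_1,\hat{\mathcal F}_2$ and split $M$ via Theorem~\ref{thm:1}---is the paper's strategy, but two of your intermediate claims are false, and the correction is precisely the paper's key lemma.

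First, the claim $\hat{\mathcal F}_i^\#=\hat{\mathcal F}_i$ is wrong. Take $M=\mathbb R^2\times\mathbb R^2$ with $\mathcal F$ the product of two $SO(2)$-orbit foliations, $\Sigma_i=\mathbb R$, $W_i=\mathbb Z_2$. Then $\hat{\mathcal F}_1$ (the fibers of $M\to\Sigma_2/W_2$) has leaves $\mathbb R^2\times S^1_r$, while its dual leaf at a generic point is a slice $\mathbb R^2\times\{pt\}$: the horizontal distribution for $\hat{\mathcal F}_1$ is one-dimensional (radial in the first factor) and never leaves that slice. So the dual leaves are strictly \emph{smaller} than the $\hat{\mathcal F}_i$-leaves, and in fact lie in leaves of the \emph{other} coarsening. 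Consequently your reading of Theorem~\ref{thm:1}---that it yields $\hat{\mathcal F}_1=\{M_1\times\{p\}\mid p\in M_2\}$---also fails: if leaves equalled dual leaves, the conclusion of Theorem~\ref{thm:1} would force the $N$-factor to be a point, so this can happen only when $\hat{\mathcal F}_1$ is the one-leaf foliation. What Theorem~\ref{thm:1} actually gives is $\hat{\mathcal F}_1=\{L_1\times M_2\mid L_1\in\mathcal F_1\}$, with the dual leaves being the copies $M_1\times\{p\}$.

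Second, the de~Rham matching of the two splittings is not justified. The tangent spaces to $\hat{\mathcal F}_1$- and $\hat{\mathcal F}_2$-leaves are \emph{not} orthogonal (in the example above they have a large common intersection and together span $TM$); only the two horizontal distributions, i.e.\ $T\Sigma_1$ and $T\Sigma_2$, are orthogonal. So you cannot appeal to orthogonality of the leaves to align the factorizations. The paper's substitute is the crucial observation that any $\hat{\mathcal F}_1$-horizontal curve is mapped by the leaf-space projection $p_2$ to a single point of $\Sigma_2/W_2$; hence every dual leaf of $\hat{\mathcal F}_1$ sits \emph{inside} a single $\hat{\mathcal F}_2$-leaf, which shows the $M_1$-factor obtained from Theorem~\ref{thm:1} is automatically $\hat{\mathcal F}_2$-vertical. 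One then reruns the last paragraph of the proof of Theorem~\ref{thm:1} to split $\hat{\mathcal F}_2$ along the \emph{same} product $M_1\times M_2$, without invoking de~Rham uniqueness or a second independent application of the theorem. Your final step (recovering $\mathcal F$-leaves as components of intersections of $\hat{\mathcal F}_1$- and $\hat{\mathcal F}_2$-leaves) is correct once the compatible splitting is in hand.
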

\addtocounter{theorem}{-1}
\endgroup
\begin{proof}
With Theorem \ref{thm:1} at hand, \ref{thm:polar} follows directly from the arguments in \cite{lytchak2014polar},  section 2.5 and the proof of Proposition 4.2. For convenience, we briefly recall them here.

Suppose that a polar foliation $\mathcal F$ is given by a metric quotient $p:M\to \Delta$, so  $\Delta$ is isometric to $\Sigma/W$. Further suppose that $\Sigma$ admits a polar foliation $\mathcal G$ which is invariant by $W$ (i.e., $W$ takes $\mathcal G$-leaves to $\mathcal G$-leaves), thus $W$ acts on $\Sigma/\mathcal G=\Delta'$. It follows that the fibers of $q\circ p:M\to \Delta' $ defines a polar foliation on $M$ (we refer to \cite[section 2.5]{lytchak2014polar} for details).  

This is certainly the case when the action of $W$ splits. Indeed, denote   $q_i:\Delta\to \Delta_i=\Sigma_i/W_i$ the metric quotients. Then $q_1\circ p,q_2\circ p$ define two polar foliations ${\mathcal F_1'}$, ${\mathcal F_2'}$ on $M$, whose sections are $\Sigma_1$ and $\Sigma_2$, respectively.

By Theorem \ref{thm:1}, $M$ decomposes as $M=M_1\times M_2$, where the leaves of $\mathcal F_1'$ are products of $M_2$ with the leaves of a foliation $\mathcal F_1$ in  $M_1$.
Since every $\mathcal{F}'_1$-horizontal curve is mapped by $p$ into a $\Delta_1$-factor, and hence by $p_2$ to a point, any dual leaf to $\mathcal{F}'_1$ is contained in a $\mathcal{F}'_2$-leaf. Thus, the $M_1$-factor is $\mathcal F_2'$-vertical. The proof is concluded by applying the arguments in the last paragraph of the proof of Theorem \ref{thm:1} to conclude that  $\mathcal F_2'$ splits as a foliation $\mathcal F_2$ in $M_2$ and the one-leaf foliation on $M_1$. This completes  the proof since each  leaf in $\mathcal F$ is  the intersection of a leaf in $\mathcal F_1'$ and one in $\mathcal F_2'$. 
\end{proof}
\section*{Acknowledgement}

The  authors thank A. Lytchak for  making them aware of the problem and the application to polar foliations. This work is part of the PhD thesis of the first author and
was partially supported by CNPq  [404266/2016-9 to LS]; FAPESP  [2017/19657-0 to LS]; and CAPES [88882.329041/2019-01 to RS]

\bibliographystyle{amsplain}
\bibliography{main}

\providecommand{\bysame}{\leavevmode\hbox to3em{\hrulefill}\thinspace}
\providecommand{\MR}{\relax\ifhmode\unskip\space\fi MR }
\providecommand{\MRhref}[2]{%
  \href{http://www.ams.org/mathscinet-getitem?mr=#1}{#2}
}
\providecommand{\href}[2]{#2}
\begin{thebibliography}{1}

\bibitem{angulo2013twisted}
P.~Angulo-Ardoy, L.~Guijarro, and G~Walschap, \emph{Twisted submersions in
  nonnegative sectional curvature}, Archiv der Mathematik \textbf{101} (2013),
  no.~2, 171--180.

\bibitem{ewert1998splitting}
Heiko Ewert, \emph{A splitting theorem for equifocal submanifolds in simply
  connected compact symmetric spaces}, Proceedings of the American Mathematical
  Society \textbf{126} (1998), no.~8, 2443--2452.

\bibitem{guijarro2008dual}
L.~Guijarro and G.~Walschap, \emph{The dual foliation in open manifolds with
  nonnegative sectional curvature}, Proceedings of the American Mathematical
  Society \textbf{136} (2008), no.~4, 1419--1425.

\bibitem{liu2020polar}
Xiaobo Liu and Marco Radeschi, \emph{Polar foliations on symmetric spaces and
  the mean curvature flow}, arXiv preprint arXiv:2006.03945 (2020).

\bibitem{lytchak2014polar}
A.~Lytchak, \emph{Polar foliations of symmetric spaces}, Geometric and
  Functional Analysis \textbf{24} (2014), no.~4, 1298--1315.

\bibitem{sperancca2018riemannian}
L.~D. Speran{\c{c}}a, \emph{On {R}iemannian foliations over positively curved
  manifolds}, The Journal of Geometric Analysis \textbf{28} (2018), no.~3,
  2206--2224.

\bibitem{speranca_grove}
L.~D. Speran\c{c}a, \emph{Totally geodesic {R}iemannian foliations on compact
  lie groups}, arXiv:1703.09577, 2017.

\bibitem{wilking2007duality}
B.~Wilking, \emph{A duality theorem for riemannian foliations in nonnegative
  sectional curvature}, Geometric and Functional Analysis \textbf{17} (2007),
  1297–--1320.

\end{thebibliography}

\end{document}